\documentclass[12pt,letterpaper]{article}
\usepackage{graphicx}
\usepackage{amsmath}
\usepackage{amsfonts}
\usepackage{amsthm}
\usepackage{amssymb}
\usepackage{color}

\usepackage{verbatim} 
\usepackage{hyperref}
\usepackage{breakurl}

\newtheorem{prop}{Proposition}
\newtheorem{corollary}[prop]{Corollary}
\newtheorem{theorem}[prop]{Theorem}
\newtheorem{lemma}[prop]{Lemma}

\theoremstyle{remark}

\newcommand{\be}{\begin{equation}}
\newcommand{\ee}{\end{equation}}
\newcommand{\bes}{\begin{equation*}}
\newcommand{\ees}{\end{equation*}}
\newcommand{\bea}{\begin{eqnarray}}
\newcommand{\eea}{\end{eqnarray}}
\newcommand{\beas}{\begin{eqnarray*}}
\newcommand{\eeas}{\end{eqnarray*}}

\begin{document}

\title{Parallel Weighings}

\author{Tanya Khovanova\\MIT}

\maketitle

\begin{abstract}
I introduce, solve and generalize a new coin puzzle that involves parallel weighings.
\end{abstract}

\section{Ancient Coin Problems}\label{ancient}

I heard my first coin problem when I was very young: 

\begin{quote}
Given 9 coins, one of them fake and lighter, find the fake coin in two weighings on a balance scale.
\end{quote}

I believed that this problem was thousands of years old, that Pythagoras could have invented it. But surprisingly its first publication was by E. D. Schell in the January 1945 issue of the American Mathematical Monthly \cite{Schell}. This does not prove that Pythagoras did not invent it, but it makes the said event highly unlikely.

I will not be surprised if everyone who reads this paper has heard this problem before and knows how to solve it. But I still need to discuss it to establish the methods that are used later in my featured problem.

First, the problem implies that all real coins weigh the same, and that we need to find a strategy that guarantees finding the fake coin in two weighings. 

By the way, two weighings is the smallest number of weighings that guarantee finding the fake coin. In addition, 9 is the largest number of coins such that the fake coin can be found in two weighings. Rather than discussing the solution for the 9-coin problem, I would like to generalize it to any number of coins:

\begin{quote}
Given $N$ coins, one of them fake and lighter, find the minimum number of weighings that will guarantee finding the fake coin.
\end{quote}

Here is a way to think about it. In one weighing divide all coins into three piles: the coins that go onto the left pan, the coins that go onto the right pan, and the coins that do not go on the scale. Clearly, we need to put the same number of coins on the pans, otherwise, we do not get any meaningful information. If the scale balances then the fake coin is in the leftover pile. If the scale does not balance, then the fake coin is in the pile that is lighter. Either way, the coin is in one of three piles and we have to start all over with this smaller pile. So to minimize the number of weighings, we need to divide all the coins into three piles so that the largest pile is the smallest. Oops, that didn't sound right. I mean, we need to minimize the size of the largest pile. Dividing the coins into three piles as evenly as possible allows us with $n$ weighings to find the fake coin among up to $3^n$ coins. In particular, we can find the fake coin among 9 coins in two weighings.

Another famous coin puzzle appeared almost at the same time as the previous puzzle \cite{Eves}.

\begin{quote}
There are 12 coins; one of them is fake. All real coins weigh the same. The fake coin is either lighter or heavier than the real coins. Find the fake coin and figure out whether it is heavier or lighter in 3 weighings on a balance scale.
\end{quote}

The solution is well known and quite beautiful. Unsurprisingly it generated more publications than the 9-coin problem \cite{BDescartes}, \cite{Goodstein}, \cite{Grossman}, \cite{Withington}. Readers who do not know the solution should try it.

What is the minimum number of weighings in this puzzle's setting for any number of coins? I just want to point out that we need to assume that the number of coins is more than 2, otherwise we cannot solve it at all.

If there are $N$ coins, then there are $2N$ possible answers to this puzzle. We need to pinpoint the fake coin and say whether it is heavier or lighter. Each weighing divides information into three parts, so in $n$ weighings we can give $3^n$ different answers. Thus, the expected number of weighings should be of the order $\log_32N$. The exact answer can be calculated using this additional constraint of having the same number of coins on each pan in each weighing. The exact answer is $(3^n-3)/2$, see \cite{Dyson}, \cite{Fine}.

In the following important variation of the latter puzzle we need to find the fake coin, but do not need to tell whether is it heavier or lighter \cite{Dyson}.

\begin{quote}
There are $N$ coins; one of them is fake. All real coins weigh the same. The fake coin is either lighter or heavier than the real coins. What is the maximum number of coins for which you can guarantee finding the fake coin with $n$ weighings on a balance scale?
\end{quote}

This problem is very similar to the previous one. Let me call the previous problem the \textit{find-and-label} problem, as opposed to this problem that I will call the \textit{just-find} problem.

The answer to the just-find problem is $(3^n-1)/2$, see \cite{Mauldon}. In particular, 13 coins is the best we can do in 3 weighings. 

Notice that for every strategy for the find-and-label problem that resolves $n$ coins, we can produce a strategy for the just-find problem that resolves $n+1$ coins by adding a coin that is never on the scale. Indeed, in the find-and-label strategy by the last weighing at least one of the weighings needs to be unbalanced to label the fake coin. Thus, if all the weighings balance at the end, the fake coin is the extra coin.

\section{The Original Parallel Weighings Puzzle}\label{sec:original}

We have all been hearing about parallel computing, and now it has turned up in a coin-weighing puzzle invented by Konstantin Knop. The puzzle appeared at 2012 Russia-Ukraine Puzzle Tournament \cite{URTour} and in Konstantin Knop's blog \cite{Knop}.

\begin{quote}
We have $N$ indistinguishable coins. One of them is fake, and it is not known whether it is heavier or lighter than the genuine coins, which all weigh the same. There are two balance scales that can be used in parallel. Each weighing lasts one minute. What is the largest number of coins $N$ for which it is possible to find the fake coin in five minutes?
\end{quote}

\section{The Road Map}

Section~\ref{sec:multiple} describes the similarity of the original puzzle with a multiple-pans problem: a coin weighing puzzle involving balance scales with not two, but any number of pans. The notion of a coin's potential---a useful technical tool in solving coin weighing puzzles---is defined in Section~\ref{sec:potential}.  How many coins with known potential can be processed in $n$ minutes is discussed there too. Section~\ref{sec:unlimited} provides a solution to the parallel weighing problem in case we have an unlimited supply of real coins. It is followed by a solution to the original puzzle and its generalization for any number of minutes in Section~\ref{sec:solution}. Section~\ref{sec:morescales} generalizes these results to the use of more than two scales in parallel. The find-and-label variation of this problem for any number of minutes is discussed in Section~\ref{sec:findlabel}. The last Section~\ref{sec:lazy} compares the find-and-label problem with the just-find problem.

\section{Warm-up: Multiple Pans Problem}\label{sec:multiple}

Knop's puzzle reminds me of another coin-weighing problem, where in a similar situation you need to find a fake coin by using five weighings on one scale with four pans. The answer in this variation would be $5^5 = 3125$. Divide coins in five groups with the same number of coins and put four groups on the four pans of the scale. If one of the pans is different (heavier or lighter), then this pan contains the fake coin. As it is one out of four pans, then after the weighing we will know the deviation of the fake coin. Otherwise, the leftover group contains the fake coin. The strategy is to divide the coins into five piles as evenly as possible. This way each weighing reduces the pile with the fake coin by a factor of five. Thus, it is possible to resolve $5^n$ coins in $n$ weighings. 

I leave it to the reader to check that, excluding the case of two coins, any number of coins greater than $5^{n-1}$ and not greater than $5^n$ can be optimally resolved in $n$ weighings.

One scale with four pans gives you more information than two scales with two pans used in parallel. We can expect that Knop's puzzle requires at least the same number of weighings as the four-pan puzzle for the same number of coins. So the answer to Knop's puzzle should not not exceed 3125. But what will it be?

If you know Russian, you can read the author's solution to the original puzzle at Knop's blog \cite{Knop}, otherwise, bear with me and you will get the answer to this puzzle, as well as the answers to this puzzle's generalizations.

\section{Coins Potential}\label{sec:potential}

While weighing coins, we may be able to determine some incomplete information about a coin's reality.  For instance, we may be able to rule out the possibility that a given coin is fake-and-heavy, without being able to tell whether that coin is real or fake-and-light.  Let us call such a coin \textit{potentially light}; and conversely, let us say a coin is \textit{potentially heavy} if it could be real or fake-and-heavy but cannot be fake-and-light.

How many coins with known potential can be processed in $n$ minutes?

If all the coins are potentially light then we can find the fake coin out of $5^n$ coins in $n$ minutes. Indeed, in this case using two scales or one scale with four pans (see Section~\ref{sec:multiple}) gives us the same information. The pan that is lighter contain the fake coin. If everything balances, then the fake coin is not on the scales.

What if there is a mixture of potentials? Can we expect the same answer? How much more complicated could it be? Suppose there are five coins: two of them are potentially light and three are potentially heavy. Then on the first scale we compare one potentially light coin with the other such coin. On the other scale we compare one potentially heavy coin against another potentially heavy coin. The fake coin can be determined in one minute.

Our intuition suggests that it is a bad idea to compare a potentially heavy coin on one pan with a potentially light coin on the other pan. Such a weighing, if unbalanced, will not produce any new information. On the other hand, if we compare a potentially heavy coin with a potentially heavy coin, then we will get new information. If the scale balances, then both coins are real. If the scale does not balance, then the fake coin is the heavier coin out of the two that are potentially heavy.

Does this mean that we should only put coins with the same potential on the same scale? Actually, we can mix the coins. For example, suppose we put 3 potentially light coins and 5 potentially heavy coins on each pan of the same scale. If the left pan is lighter, then the potentially heavy coins on the left pan and potentially light coins on the right pan must be genuine. The fake coin must be either one of the three potentially light coins on the left pan or one of the five potentially heavy coins on the right pan.

In general, after each minute, the best hope is to have the number of coins that are not determined to be real to be reduced by a factor of 5. If one of the weighings on one scale is unbalanced, then the potentially light coins on the lighter pan, plus the potentially heavy coins on the heavier pan would contain the fake coin. We do not want this number to be bigger than one-fifth of the total number of coins being processed. So, divide coins in pairs with the same potential and from each pair put the coins on different pans of the same scale. 

In one minute we can divide the group into five equal, or almost equal, groups. If there is an odd number of coins with the same potential, then the extra coin does not go on the scales. The only thing left to check is what happens if the number of coins is small. Namely, we need to check what happens when the number of potentially light coins is odd and the number of potentially heavy coins is odd, and the total number of coins is not more than five. In this case the algorithm requires us to put aside two coins: one potentially heavy and one potentially light, but the put-aside pile cannot have more than one coin.

After checking small cases, we see that we cannot resolve the problem in one minute when there are 2 coins of different potential, or when the 4 coins are distributed as 1 and 3. On the other hand, if there are extra coins that are known to be real, then the above cases can be resolved. This means that the small cases are only a problem if they happen in the first minute. Hence, 

\begin{lemma}
Any number of coins $N > 4$ with known potential can be resolved in $\lceil \log_5 N \rceil$ minutes.
\end{lemma}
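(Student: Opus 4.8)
The plan is to prove the statement by induction on $n=\lceil\log_5 N\rceil$, with a single \emph{one-minute reduction} as the engine. I would state the reduction so that it preserves exactly the data the induction needs: starting from a collection of coins of known potential containing one fake, of which $N$ are not yet certified genuine, one minute of parallel weighing should either expose the fake or cut the number of uncertified coins to at most $\lceil N/5\rceil$ while keeping all their potentials known. Iterating $n$ times then drives the count from $N\le 5^n$ down to $1$, because $\lceil\log_5\lceil N/5\rceil\rceil=\lceil\log_5 N\rceil-1$ whenever $N\ge 2$; that identity is the backbone of the induction.

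To build the reduction I would formalize the pairing idea from the discussion above. Split the uncertified coins into the potentially-light and potentially-heavy classes, pair coins within each class, put the two members of a pair on opposite pans of one scale, and keep every unpaired coin (and any pair I decline to use) off the scales. The key accounting is that a tilted scale implicates only the potentially-light coins on its lighter pan together with the potentially-heavy coins on its heavier pan, whereas a balanced scale clears everything on it; and since there is a single fake, at most one of the two scales can tilt. Hence the nine raw outcomes collapse to five suspect groups---two from each scale plus the off-scale pile---and I would route pairs to the two scales so that each of the five groups has at most $\lceil N/5\rceil$ coins, with potentials still known. That is exactly the claimed reduction.

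The delicate point, and where I expect the actual work to lie, is the boundary bookkeeping when $N$ is small or the two classes have inconvenient parities, forcing leftover coins off the scales. Following the text I would isolate the two genuinely obstructed configurations---two coins split $1{+}1$, and four coins split $1{+}3$---and dispose of every other configuration by the even five-way split. To handle the obstructions I would strengthen the induction hypothesis to also record that a certified-genuine coin is available: whenever a scale tilts, every coin off that scale is genuine, so for $N\ge 2$ the reduced instance always inherits a spare genuine coin. One genuine coin lets me test a suspect directly and separate the two offending cases in a single minute, so the obstructions can bite only on the very first minute---and there the hypothesis $N>4$ forbids them.

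Finally I would settle the base case $n=1$, i.e.\ $N=5$, by checking that the even split always produces singleton groups (so the fake is named outright), and note that the hypothesis $N>4$ is used in precisely one place: to ensure the initial instance is neither of the two bad configurations, every later instance being protected instead by the spare genuine coin manufactured in the first minute.
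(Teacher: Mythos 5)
Your proposal is correct and follows essentially the same route as the paper: pairing coins of equal potential on opposite pans to get a five-way split of the suspects, identifying the $1{+}1$ and $1{+}3$ splits as the only obstructions, and observing that after the first minute certified genuine coins are always on hand to defuse them, so that the hypothesis $N>4$ is needed only at the start. Your packaging as an induction with the strengthened hypothesis (a spare genuine coin is available after minute one) is a more explicit rendering of the paper's informal discussion, but the ideas are the same.
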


\section{Unlimited Supply of Real Coins}\label{sec:unlimited}

We say that the coin that is potentially light or potentially heavy has \textit{known potential}. The notion of known potential is important in solving Knop's puzzle and many other coin weighing puzzles due to the following theorem:

\begin{theorem}
In a coin-weighing puzzle, where only one coin is fake, any coin that visited the scales is either genuine or its potential is known.
\end{theorem}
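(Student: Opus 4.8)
The plan is to show that the conclusion is already forced by a single weighing, so it suffices to examine one weighing in which the coin in question appears. Throughout I would use the standing convention (noted in Section~\ref{ancient}) that every weighing places the same number of coins on each pan; together with the hypothesis that all genuine coins weigh the same, this guarantees that a single scale balances whenever every coin on its two pans is genuine. I would also lean on the hypothesis that exactly one coin is fake, so that whenever a fake coin sits on a pan, every other coin on that scale is genuine and the direction of any imbalance is attributable entirely to the fake coin.

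Let $c$ be any coin that has visited the scales, and fix a weighing $W$ (a single use of one balance scale) in which $c$ lies on one of the pans. I would split into the three possible outcomes of $W$. If $W$ balanced, then the fake coin cannot be on either pan of $W$: were it there, it would be the unique coin differing from genuine weight, tipping its pan and preventing balance; hence $c$ is genuine. If instead $W$ was unbalanced and $c$ lay on the heavier pan, I would argue that $c$ cannot be fake-and-light, for if it were, then all other coins would be genuine, so $c$'s pan would carry one light coin among otherwise-genuine coins and would come out lighter, contradicting that it is the heavier pan. Thus the possibility fake-and-light is eliminated for $c$, i.e.\ $c$ is potentially heavy. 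The case where $c$ lies on the lighter pan is symmetric and shows $c$ is potentially light.

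In every case $c$ is either genuine or of known potential, as claimed. The one point that needs care is that this status, deduced from the single weighing $W$, is never undone by subsequent weighings: ruling out a possibility (say fake-and-light for $c$) is a monotone gain of information, and no later observation can reinstate an option already shown to be inconsistent with the record. Consequently the classification obtained from $W$ persists, and the theorem follows. I expect the only genuine subtlety to be this appeal to single-fakeness, used both to pin down the direction of the imbalance in the unbalanced case and to force the contradiction in the balanced case; the remainder is a routine case analysis on the outcome of $W$.
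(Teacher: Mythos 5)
Your proof is correct and follows essentially the same case analysis as the paper's: balanced scale forces genuineness, and an imbalance rules out one of fake-and-light or fake-and-heavy for each coin on it, all by appeal to single-fakeness. The only cosmetic difference is that you localize to one weighing and invoke monotonicity of information, whereas the paper aggregates over all weighings a coin visited (also recording that a coin seen on both a heavier and a lighter pan is real), but the underlying argument is the same.
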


\begin{proof}
If the scale ever balanced, all coins that were on it on any such occasion are real. Any coin that appeared on both a heavier pan
and a lighter pan is also real. Otherwise, the coins that only visited lighter pans are potentially light and the coins that only visited heavier pans are potentially heavy.
\end{proof}

Now let us go back to the original problem, in which we do not know the coins' potential at the start. Let us temporarily add an additional assumption to the original problem. Suppose there is an unlimited supply of coins that we know to be real. Let $u(n)$ be the maximum number of coins we can process in $n$ minutes if we do not know their potential and have an unlimited supply of real coins.

\begin{lemma}
$u(n) = 2\cdot 5^{n-1} + u(n-1).$
\end{lemma}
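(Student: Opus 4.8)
The plan is to prove the recursion by establishing the two matching inequalities $u(n) \ge 2\cdot 5^{n-1} + u(n-1)$ and $u(n) \le 2\cdot 5^{n-1} + u(n-1)$, in both cases by analyzing the very first minute. Throughout I will use two structural facts about a single minute with two parallel two-pan scales when exactly one coin is fake: since the fake coin sits on at most one pan, at most one of the two scales can be unbalanced, so a minute has at most five distinguishable outcomes (both balanced; scale~1 tips one of two ways; scale~2 tips one of two ways); and if both scales balance, then the fake coin was on neither scale. I will also invoke the Theorem above, which guarantees that after an unbalanced weighing every coin that visited a scale is either known real or has known potential.

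For the lower bound I exhibit a strategy. In the first minute I place $5^{n-1}$ of the unknown coins on the left pan of scale~1 against $5^{n-1}$ known real coins on its right pan (the unlimited supply makes this a legal weighing), I do the same on scale~2 with another $5^{n-1}$ unknown coins, and I leave the remaining $u(n-1)$ unknown coins off both scales. If scale~1 tips, the fake is among its $5^{n-1}$ coins, which now all share the same known potential; by the previous Lemma these are resolvable in $\lceil\log_5 5^{n-1}\rceil = n-1$ further minutes, and the unlimited supply of real coins removes the small-case obstructions noted in that Lemma. The same holds if scale~2 tips. If both scales balance, the fake is among the $u(n-1)$ off-scale coins, which by definition of $u$ can be resolved in the remaining $n-1$ minutes. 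Hence $2\cdot 5^{n-1} + u(n-1)$ coins can be processed, giving the lower bound.

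For the upper bound I consider an arbitrary strategy processing $N$ unknown coins and look at how its first minute partitions them into $a$ coins on scale~1, $b$ coins on scale~2, and $c$ coins off both scales, so $N = a+b+c$. If both scales balance the fake lies among the $c$ off-scale coins, which retain unknown potential and must be finished in $n-1$ minutes, forcing $c \le u(n-1)$. If scale~1 is unbalanced, the fake lies among its $a$ coins, each of which now has a single known potential and so contributes exactly one remaining scenario; these must be distinguished in $n-1$ minutes. Here I need the converse of the earlier Lemma: because each minute offers at most five outcomes, a decision tree of depth $n-1$ has at most $5^{n-1}$ leaves, and since distinct candidate fakes must reach distinct leaves, at most $5^{n-1}$ known-potential coins can be resolved in $n-1$ minutes; thus $a \le 5^{n-1}$, and symmetrically $b \le 5^{n-1}$. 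Adding the three bounds gives $N \le 2\cdot 5^{n-1} + u(n-1)$, which together with the lower bound proves the recursion.

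The step I expect to be the main obstacle is this converse counting bound for coins of known, possibly mixed, potential, since the Lemma quoted earlier supplies only the achievability direction. The essential point to pin down is that a single parallel-weighing minute genuinely offers at most five, rather than nine, distinguishable outcomes — which rests squarely on the single-fake assumption — and that each known-potential coin contributes exactly one scenario to be separated, so that the crude ``at most $5^{n-1}$ leaves'' count remains valid even when potentially-light and potentially-heavy coins are mixed on the scales and even when arbitrarily many known real coins are added.
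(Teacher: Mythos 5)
Your proof is correct and takes essentially the same approach as the paper: bound the leftover pile by $u(n-1)$ when both scales balance, bound each scale's load by $5^{n-1}$ using the fact that only one scale can tip, and achieve the bound by weighing $5^{n-1}$ unknown coins against $5^{n-1}$ known real ones on each scale. The only difference is that you spell out the decision-tree counting argument for $a \le 5^{n-1}$, which the paper asserts without proof.
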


\begin{proof}
What information do we get after the first minute? Both scales might be balanced, meaning that the fake coin is in the leftover pile of coins with unknown potential. So we have to leave out not more than $u(n-1)$ coins. On the other hand, exactly one scale might be unbalanced. In this case, all the coins on this scale will have their potential revealed. The number of these coins cannot be more than $5^{n-1}$, so $u(n) \leq 2 \cdot 5^{n-1} + u(n-1)$. Can we achieve this bound? Yes. On each scale, put $5^{n-1}$ unknown coins on one pan, and $5^{n-1}$ real coins from the supply on the other. Thus,  $u(n) = 2 \cdot 5^{n-1} + u(n-1)$.
\end{proof}

We also can see that $u(1) = 3$. Indeed, on each scale put one coin against one real coin and have one coin in the leftover pile. Thus, the corollary:

\begin{corollary}
$u(n) = (5^n+1)/2.$
\end{corollary}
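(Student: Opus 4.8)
The plan is to treat the corollary as the closed-form solution of the first-order linear recurrence established in the preceding lemma, namely $u(n) = 2\cdot 5^{n-1} + u(n-1)$, together with the base value $u(1) = 3$ noted just above. Since the recurrence is linear with a single geometric forcing term, the cleanest route is a short induction on $n$.

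First I would check the base case: $u(1) = 3 = (5^1+1)/2$, which matches. Next I would assume as the inductive hypothesis that $u(n-1) = (5^{n-1}+1)/2$, and substitute it into the recurrence:
\be
u(n) = 2\cdot 5^{n-1} + \frac{5^{n-1}+1}{2} = \frac{4\cdot 5^{n-1} + 5^{n-1} + 1}{2} = \frac{5\cdot 5^{n-1}+1}{2} = \frac{5^n+1}{2}.
\ee
This closes the induction and yields the claimed formula.

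Alternatively, one could telescope directly: unrolling the recurrence gives $u(n) = u(1) + 2\sum_{k=2}^{n} 5^{k-1}$, and summing the geometric series $\sum_{k=2}^{n} 5^{k-1} = (5^n-5)/4$ produces $u(n) = 3 + (5^n-5)/2 = (5^n+1)/2$, in agreement with the inductive computation.

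There is no real obstacle here: the statement is precisely the resolution of a geometric recurrence, so the ``hard part'' is merely bookkeeping. The only points requiring care are combining the $2\cdot 5^{n-1}$ term with the halved inductive term (equivalently, tracking the lower index of the geometric sum correctly) and confirming that the base case $u(1)=3$ is consistent with the formula, so that the induction is properly anchored.
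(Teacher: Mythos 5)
Your proof is correct and matches the paper's (implicit) argument: the paper likewise derives the corollary by combining the recurrence $u(n)=2\cdot 5^{n-1}+u(n-1)$ with the base case $u(1)=3$, leaving the routine induction/telescoping to the reader. Both your inductive computation and your geometric-series check are arithmetically sound.
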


Thus, the answer to the puzzle problem with the additional resource of an unlimited supply of real coins is $(5^n+1)/2$. Clearly the answer without the additional resource cannot be larger. But what is it?

We assumed that there is an unlimited supply of real coins. But how many extra coins do we really need? The extra coins are needed for the first minute only, because after the first minute at least one of the scales will balance and many coins will be determined to be real. In the first minute, we need to put $5^{n-1}$ coins from the unknown pile on each scale. The coins do not have to be on the same pan. The only problem is that the number of coins is odd, so we need one extra real coin to make this number even. So our unlimited supply need not be unlimited---we just need two extra coins, one for each scale.

\section{The Puzzle Solution}\label{sec:solution}

Now let us go back and remember that the formula for $u(n)$ assumes an unlimited supply of real coins. The unlimited supply need not be more than two real coins. 

So, how can we solve the original problem? We already know that the only adjustment that is needed is in the first minute. In the first minute we put unknown coins against unknown coins, not more than $5^{n-1}$ on each scale, and, since the number on each scale must be even, the best we can do is put $5^{n-1}-1$ coins on each scale. Thus, the answer to the puzzle is $(5^n-3)/2$.

Do not forget that we cannot find the fake coin out of 2 coins, ever.

\begin{theorem}
Given two scales in parallel, the number of coins $N$ that can be optimally resolved in exactly $n$ minutes is: $(5^{n-1}-3)/2 \leq N < (5^n-3)/2$, with one exception: $N=2$, for which the fake coin cannot be identified.
\end{theorem}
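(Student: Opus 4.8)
The plan is to package the content of the preceding sections into a single threshold statement and then read off both the range and the exception from it. Write $M(n)$ for the largest number of coins, of initially unknown potential and with no spare real coins, that can be resolved in at most $n$ minutes; the goal is to show $M(n) = (5^n-3)/2$ and that every $N$ with $2 \ne N \le M(n)$ is in fact resolvable in $n$ minutes. Granting this, a coin count $N$ forces exactly $n$ minutes precisely when it is resolvable in $n$ but not in $n-1$ minutes, i.e. when $M(n-1) < N \le M(n)$, which is the interval $(5^{n-1}-3)/2 < N \le (5^n-3)/2$ asserted by the theorem (up to how one places the endpoints). So the real work splits into an upper bound $M(n) \le (5^n-3)/2$, a matching construction, a monotonicity remark, and the separate treatment of $N=2$.

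For the upper bound I would reuse the first-minute analysis verbatim. With no real coins available, each scale must carry equally many unknown coins on its two pans, so the number of coins on each scale is even. If exactly one scale comes out unbalanced, then every coin on that scale acquires known potential (potentially heavy on the heavier pan, potentially light on the lighter pan) and the fake must be found among them in the remaining $n-1$ minutes; by the Lemma on coins of known potential this forces at most $5^{n-1}$, hence at most $5^{n-1}-1$ after imposing evenness. If instead both scales balance, all the weighed coins are genuine, so the leftover pile is now backed by a supply of real coins and must be resolved in $n-1$ minutes; the Corollary caps it at $u(n-1) = (5^{n-1}+1)/2$. Summing the three outcome-classes gives $N \le 2(5^{n-1}-1) + u(n-1) = (5^n-3)/2$.

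Achievability and monotonicity I would handle together. The extremal strategy is the one already described: split $5^{n-1}-1$ unknowns across the two pans of each scale and leave $u(n-1)$ unknowns aside; the two continuation scenarios are exactly the hypotheses of the Lemma and of the Corollary, and the awkward small configurations flagged in Section~\ref{sec:potential} can only obstruct the very first minute, which here is a clean unknown-versus-unknown comparison. For any $N$ strictly below $M(n)$ the same split with fewer coins works, since the per-scale cap $5^{n-1}-1$ and the leftover cap $u(n-1)$ are upper bounds; this gives the monotonicity needed to fill out the whole interval rather than just its top.

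Two points deserve care, one of them the genuine crux. The minor point is the exception: with $N=2$ and no real coin, the only possible weighing is the two unknowns against each other, and an imbalance is consistent both with the first coin being heavy and with the second being light, so two coins can never be resolved; since $2$ falls inside the $n=2$ band, it must be excised by hand, which is exactly the stated exception. The main obstacle is the parity interaction inside the upper bound: the naive count would allow $2 \cdot 5^{n-1}$ freshly determined coins, giving the with-real-coins value $u(n) = (5^n+1)/2$, and the whole gap of $2$ down to the true answer $(5^n-3)/2$ comes from the single fact that, absent real coins, an informative first weighing needs an even number of coins per scale while $5^{n-1}$ is odd. Making that one-coin-per-scale loss rigorous, and confirming that it is recovered in every later minute (where real coins are abundant), is where the argument has to be watertight.
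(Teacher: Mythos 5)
Your proposal is correct and follows essentially the same route as the paper: the paper's argument (given in the prose of Section~\ref{sec:solution}) likewise reduces everything to the first minute, observes that with no spare real coins each scale must carry an even number of unknown coins and hence at most $5^{n-1}-1$, leaves $u(n-1)$ aside, and sums to $(5^n-3)/2$, with $N=2$ excluded by hand. Your write-up is in fact more explicit than the paper's about the upper-bound case analysis and the endpoint placement, but it is the same proof.
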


Going back to the original puzzle: the largest number of coins that can be resolved in 5 minutes is 1561.

\section{More Scales}\label{sec:morescales}

It is straightforward to generalize the just-find problem to any number of scales used in parallel. Suppose the number of scales is $k$. The following problems can be solved in $n$ minutes:

\begin{description}
\item[Known Potential.] If all the coins have known potential, then any number of coins up to $(2k+1)^n$ can be resolved.
\item[Unlimited Supply of Real Coins.] If we do not know the potential of any coin and there is an unlimited supply of real coins, the maximum number of coins that can be solved is defined by a recursion: $u_k(n) = k \cdot (2k+1)^{n-1} + u_k(n-1)$ and $u_k(1)=k + 1$. Any number of coins up to $((2k+1)^n+1)/2$ can be resolved.
\item[General Case.] If we do not know the potential of any coin and there are no extra real coins, then any number of coins between 3 and $u_k(n) - k = ((2k+1)^n+1)/2 - k$ can be resolved.
\end{description}

Let me draw your attention to the fact that if $k=1$, then the general case is the classic problem of just finding the fake coin. So plugging in $k=1$ into the formula $((2k+1)^n+1)/2 - k$ above should give the answer given in Section~\ref{ancient}: $(3^n+1)/2-1$. In particular, for $n=3$, it should be 13. And it is.

\section{Find and Label}\label{sec:findlabel}

The methods described above can be used to answer another common question in the same setting: Find the fake coin and say whether it is heavier or lighter. If all coins have known potential, then the just-find problem is equivalent to the find-and-label problem.

The find-and-label problem can be solved by similar methods to the just-find problem. Namely, let us denote by $U_k(n)$ the number of coins that can be resolved in $n$ minutes in parallel on $k$ scales when there is an unlimited supply of extra real coins. Then the recursion is the same as for the just-find problem: $U_k(n) = k \cdot (2k+1)^{n-1} + U_k(n-1)$. The difference is in the starting point: and $U_k(1)=k$.

Similarly, if we do not have an unlimited supply of real coins, then the bound is described by the following theorem:

\begin{theorem}
There are $N$ coins one of which is fake, and it is not known whether it is heavier or lighter. There are also $k$ balance scales that can be used in parallel, one weighing per one minute. The maximum number of coins that requires $n$ minutes to find and label the fake coin is $((2k+1)^n+1)/2 - k -1$. If $N=2$, then the problem cannot be resolved.
\end{theorem}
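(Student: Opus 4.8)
The plan is to follow the template of Sections~\ref{sec:unlimited} and~\ref{sec:solution}, adapting the just-find analysis to the labeling requirement. The work splits into two stages: first determine the unlimited-supply count $U_k(n)$ in closed form, and then show that, exactly as before, discarding the real supply costs us only in the first minute.

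For the first stage I would take the recursion $U_k(n)=k\,(2k+1)^{n-1}+U_k(n-1)$ that precedes the theorem and verify both its endpoints. The base case $U_k(1)=k$ holds because, with a supply, the best one-minute strategy weighs a single unknown coin against a real coin on each of the $k$ scales: an unbalanced scale simultaneously \emph{locates} its coin and \emph{labels} it (the direction of imbalance tells heavy versus light), whereas a leftover coin that never touches a pan could never be labeled, so no leftover is permitted---this is precisely the one-coin gap between $U_k(1)=k$ and $u_k(1)=k+1$. The recursion itself comes from the first minute: place $(2k+1)^{n-1}$ unknowns against real coins on each scale; if scale $i$ is the unbalanced one, all its coins acquire known potential, and since for known-potential coins find-and-label coincides with just-find they can be resolved in $n-1$ minutes provided there are at most $(2k+1)^{n-1}$ of them; if every scale balances, the fake lies among the leftover unknowns, of which there may be at most $U_k(n-1)$. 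Both directions use that a real supply, however large, cannot beat the unlimited-supply maxima $(2k+1)^{n-1}$ and $U_k(n-1)$, so the argument is not circular. Telescoping then gives
\[
U_k(n)=k\sum_{i=1}^{n}(2k+1)^{i-1}=\frac{(2k+1)^n-1}{2},
\]
which is $u_k(n)-1$, the extra coin lost being exactly the unlabelable leftover.

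The second stage removes the supply, and only the first minute changes. Now every coin placed on a scale is an unknown, so each scale must carry an \emph{even} number $a_i$ of coins (equal counts on the two pans, with no real coins to pad). If scale $i$ turns out to be the unbalanced one, its $a_i$ coins gain known potential and must be resolved in the remaining $n-1$ minutes, forcing $a_i\le(2k+1)^{n-1}$; since $(2k+1)^{n-1}$ is odd and $a_i$ is even, in fact $a_i\le(2k+1)^{n-1}-1$. If all scales balance, the leftover unknowns are bounded by $U_k(n-1)$ (real coins are now available, but cannot exceed the unlimited-supply maximum). Summing,
\[
N\le k\bigl((2k+1)^{n-1}-1\bigr)+U_k(n-1)=U_k(n)-k=\frac{(2k+1)^n+1}{2}-k-1,
\]
and the same first-minute layout---$(2k+1)^{n-1}-1$ unknowns on each scale and $U_k(n-1)$ set aside---achieves equality, establishing the claimed maximum.

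I expect the main obstacle to be the upper bound of the second stage: one must argue that the evenness of each $a_i$ genuinely forfeits one coin per scale and that no cleverer first move recovers it, which amounts to checking that every post-first-minute branch reduces \emph{exactly} to one of the already-solved subproblems (known potential on the unbalanced scale, unlimited supply on the balanced leftover) with the stated capacities. The remaining point is the degenerate case $N=2$: two coins are perfectly symmetric, so any weighing of one against the other merely confirms that they differ, never revealing which is the fake nor its direction, so the problem is unresolvable regardless of the number of minutes---hence the stated exception.
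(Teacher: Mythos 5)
Your proposal is correct and follows essentially the same route as the paper: set up the unlimited-supply recursion $U_k(n)=k(2k+1)^{n-1}+U_k(n-1)$ with base $U_k(1)=k$ (the lost leftover coin being exactly the one that could never be labeled), solve it to get $((2k+1)^n-1)/2$, and then observe that removing the real-coin supply only affects the first minute, where the parity constraint forfeits one coin per scale. In fact you supply more detail than the paper does, which states the recursion and the theorem but leaves the first-minute adjustment and the $N=2$ exception to the reader by analogy with Sections~\ref{sec:unlimited} and~\ref{sec:solution}.
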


Again, if $k=1$, then this is the classic problem of just funding and labeling the fake coin. So plugging in $k=1$ into the formula $((2k+1)^n+1)/2 - k-1$ above should give the answer from Section~\ref{ancient}: $(3^n+1)/2-2$. In particular, for $n=3$, it should be 12. And it is.

\section{Lazy Coin}\label{sec:lazy}

You might have noticed that the answer for the just-find and for the find-and-label problems differ by one:

\begin{lemma}
In the parallel weighing problem with one fake coin, the maximum number of coins that can be optimally resolved in $n$ weighings for the just-find problem is one more than the maximum number of coins that can be optimally resolved in the find-and-label problem in the same number of weighings. 
\end{lemma}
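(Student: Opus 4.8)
The plan is to prove the two answers differ by exactly one by comparing the closed-form formulas already established in the two preceding sections, and then to reinforce this with a direct bijective argument that mirrors the reduction given at the end of Section~\ref{ancient}. First I would simply subtract: Section~\ref{sec:solution} (together with the $k$-scale generalization in Section~\ref{sec:morescales}) gives the just-find maximum as $((2k+1)^n+1)/2 - k$, while Section~\ref{sec:findlabel} gives the find-and-label maximum as $((2k+1)^n+1)/2 - k - 1$. The difference is patently $1$, which establishes the claim for every valid $n$ and $k$. This is the quick route, and since the lemma as stated concerns the two-scale case, one substitutes $k=2$ and reads off the gap of one.

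Since a bare formula subtraction is unilluminating, the second and more substantive part of the plan is to explain \emph{why} the extra coin appears, by adapting the ``never on the scale'' trick recalled at the end of Section~\ref{ancient}. The key observation is the structural fact recorded in the find-and-label discussion: in any optimal find-and-label strategy, the fake coin is \emph{labeled} only by having caused some scale to be unbalanced at some point. Consequently, if one takes any optimal find-and-label strategy resolving $M$ coins and adjoins a single extra coin that is designated never to touch either scale, the resulting strategy solves the just-find problem on $M+1$ coins: whenever all weighings across all minutes balance, the fake coin must be the adjoined coin, and otherwise the original strategy pins it down and labels it. This shows the just-find maximum is at least one more than the find-and-label maximum.

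For the reverse inequality I would argue that one cannot gain more than a single coin. The clean way is again through the unlimited-supply recursions: the two recursions $U_k(n) = k\cdot(2k+1)^{n-1} + U_k(n-1)$ and $u_k(n) = k\cdot(2k+1)^{n-1} + u_k(n-1)$ are identical, so the difference $u_k(n) - U_k(n)$ is invariant in $n$ and equals its value at $n=1$, namely $u_k(1) - U_k(1) = (k+1) - k = 1$. Passing from the unlimited-supply quantities to the general-case bounds subtracts the same correction term ($k$ for just-find, $k+1$ for find-and-label), and tracking these corrections confirms that the gap of one persists exactly, with the lone degenerate exception $N=2$ handled identically in both problems.

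The main obstacle I anticipate is not the arithmetic but the bookkeeping at the boundary between the unlimited-supply regime and the general case. The recursions agree, but the passage to no-extra-coins shifts each maximum by a potential-dependent amount, and one must check that the first-minute adjustment---the parity fix that forces $5^{n-1}-1$ coins per scale in the two-scale setting---applies identically to both problems so that the shift is the same and the difference of exactly one is preserved rather than distorted. I would therefore be careful to confirm that the $N=2$ exception and the small-case first-minute constraints from Section~\ref{sec:potential} are common to both problems, so that no spurious off-by-one creeps in at the edges.
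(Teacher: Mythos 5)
Your proposal is correct and takes essentially the same route as the paper: the lemma is established simply by subtracting the closed-form maxima $((2k+1)^n+1)/2-k$ and $((2k+1)^n+1)/2-k-1$ already derived in Sections~\ref{sec:morescales} and~\ref{sec:findlabel}. Your supplementary lazy-coin argument for the lower-bound direction matches the discussion the paper itself gives immediately after the lemma, where it also notes that this direction is the only one that survives for dynamic strategies.
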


We already proved the lemma by explicitly calculating the answer. It would be nice if there was a simple argument to prove it without calculations. And such an argument exists if we restrict ourselves to static strategies. In a \textit{static} or \textit{non-adaptive} strategy you decide beforehand what your weighings are. Then, seeing the results of the weighings you can find the fake coin and label it if needed.

In a static strategy for the find-and-label problem every coin has to visit the scales at some point, or, otherwise, if the coin does not visit the scales and it happens to be fake,  it can not be labeled. In a static strategy for the just-find problem if every coin visits the scale, then all the coins can be labeled. Suppose we add an extra coin that is never on the scales. If all the weighings balance, then the extra coin is the fake one. We can not have two such coins. Indeed, if one of them is fake we can not differentiate between them. If all the coins visit the scales in the just-find problem, then all the coins can be labeled at the end.

Let us call a strategy that resolves the maximum number of coins in a given number of weighings a \textit{maximal} strategy. We just showed that a maximal static strategy for the just-find problem has to have a coin that does not go on the scales. Therefore, there is a bijection between maximal static strategies for the just-find and the find-and-label problems. The strategies differ by an extra coin that sits lazily outside the scales all the time.

In \textit{dynamic} or \textit{adaptive} strategies the next weighing depends on the results of the previous weighings. With dynamic strategies the story is more complicated. There is no a bijection any more.

On one hand it is possible to add a lazy coin to a strategy in the find-and-label problem to get a strategy in the just-find problem. But there exist maximal strategies in the just-find problem where all the coins can end up on the scale.

For example, consider the following strategy to just-find the fake coin out of 4 coins in 2 weighings on one scale. In the first weighing we balance the first coin against the second. If the weighing unbalances, we know that one of the participating coins is fake, and we know the potential of every participating coin. Then in the second weighing we balance the first and the second coins against the third and the fourth. In this example, all coins might visit the scale and 4 is the maximum number of coins that can be processed in 2 weighings.

Alas! There is no simple argument, but at least it is easy to remember, that the maximal strategies for these two problems differ by one coin.

\section{Acknowledgements}

I am grateful to Daniel Klain and Alexey Radul for helpful discussions.

\end{document}